\theoremstyle{plain}
\newtheorem{theorem}{\indent\sc Theorem}[section]
\newtheorem{lemma}[theorem]{\indent\sc Lemma}
\newtheorem{corollary}[theorem]{\indent\sc Corollary}
\newtheorem{proposition}[theorem]{\indent\sc Proposition}
\theoremstyle{definition}
\newtheorem{definition}[theorem]{\indent\sc Definition}
\newtheorem{example}[theorem]{\indent\sc Example}
\newcommand\on{\operatorname}
\newcommand\Span{\on{Span}}
\begin{document}

\title{Some properties of pointwise $k$-slant submanifolds of K\"{a}hler manifolds}
\author{Adara M. Blaga and Dan Radu La\c tcu}
\date{}
\maketitle

\begin{abstract}
We study some properties of pointwise $k$-slant submanifolds of almost Hermitian manifolds with a special view towards
K\"{a}hler manifolds. In particular, we characterize the integrability of the component distributions, treating also the totally geodesic case.
\end{abstract}

\markboth{{\small\it {\hspace{0.5cm} Some properties of pointwise $k$-slant submanifolds of K\"{a}hler manifolds}}}{\small\it{Some properties of pointwise $k$-slant submanifolds of K\"{a}hler manifolds \hspace{0.5cm}}}

\footnote{
2020 \textit{Mathematics Subject Classification}: 53C15, 53C25, 53C40, 58A30.}
\footnote{
\textit{Key words and phrases}: pointwise $k$-slant submanifold, K\"{a}hler manifold, integrable distribution, totally geodesic submanifold.}

\bigskip

\section{Introduction and Preliminaries}

\textit{Slant} and \textit{pointwise slant submanifolds}, introduced by Chen \hspace{-2pt}\cite{chen} and Etayo \hspace{-2pt}\cite{etayo} (see also \cite{chen5}), respectively,
have been intensively investigated in different geometries. Recently, La\c tcu \hspace{-2pt}\cite{latcu} defined the more general notions of \textit{$k$-slant} and \textit{pointwise \mbox{$k$-slant} submanifold} of an almost product Riemannian, an almost Hermitian, and an almost contact or paracontact metric manifold,
involving the decomposition of the tangent bundle of a submanifold into a sum of orthogonal slant or pointwise slant distributions.
It's to be mentioned that Ronsse \hspace{-2pt}\cite{ro} and Chen \hspace{-2pt}\cite{geo,chen}
considered, in the almost Hermitian case, the orthogonal decomposition of
the tangent space in a point of a submanifold
into the direct sum of the eigenspaces corresponding to the square of the tangential component of the structural tensor field.
Accordingly, the submanifold was called \hspace{-2pt}\cite{ro} a \textit{generic submanifold}, or, under some restrictions, a \textit{skew CR submanifold}.

In this paper, we focus on some properties of pointwise $k$-slant submanifolds of almost Hermitian manifolds, with a special view toward the K\"{a}hler case. More precisely, we characterize the integrability of the component distributions, and we obtain some properties of such submanifolds with parallel tensor fields, discussing also the totally geodesic case.

\bigskip

Let $(\bar {M}, g)$ be a Riemannian manifold, and let $\varphi$ be a $(1,1)$-tensor field on $\bar {M}$. We recall that $(\bar M,\varphi,g)$ is said to be an \textit{almost Hermitian manifold} if
$$\varphi^{2}=-I \ \ \textrm{and} \ \ g(\varphi \cdot,\varphi \cdot)=g,$$
which further gives
$$g(\varphi \cdot,\cdot) =-g(\cdot,\varphi \cdot).$$

If the structural endomorphism $\varphi$ satisfies $\bar \nabla \varphi=0$, where $\bar \nabla$ is the Levi-Civita connection of $g$, then $\bar M$ is called a \textit{K\"{a}hler manifold}.

\bigskip

For a submanifold $M$ of an almost Hermitian manifold $(\bar M,\varphi,g)$ defined by an injective immersion, we will denote the induced metric on $M$ also with $g$ and by $\nabla$ the Levi-Civita connection on $M$. The Gauss and Weingarten equations are:
$$\bar \nabla_XY=\nabla_XY+h(X,Y) \ \ \textrm{and} \ \ \bar \nabla_XV=-A_VX+\nabla^{\bot}_XV$$
for all $X,Y\in \Gamma(TM)$ and $V\in \Gamma(T^{\bot}M)$, where $h$ is the second fundamental form and $A$ is the shape operator, related by $g(h(X,Y),V)=g(A_VX,Y)$.

We have the orthogonal decomposition
$$T\bar M=TM\oplus T^{\bot}M,$$
and, for all $X\in \Gamma(TM)$ and $V\in \Gamma(T^{\bot}M)$, we will write:
$$\varphi X=TX+NX \ \ \textrm{and} \ \ \varphi V=tV+nV,$$
where $TX$, $NX$ and $tV$, $nV$ stand for the tangent and the normal component of $\varphi X$ and $\varphi V$, respectively.

\section{Pointwise $k$-slant submanifolds of \\ almost Hermitian manifolds}

We recall that a distribution $\mathcal D\subseteq TM$ is called a \textit{pointwise slant distribution} if, at each point $p \in M$, the angle $\theta(p)$ between $\varphi X_p$ and $\mathcal D_p$ is nonzero and independent of the choice of the tangent vector $X_p \in \mathcal D_{p}\verb=\=\{0\}$ (but it depends on $p \in M$). In this case, the function $\theta$ is called the \textit{slant function}.

\begin{definition} {\hspace{-2pt}\rm\cite{latcu}} \label{d1}
A submanifold $M$, defined by an injective immersion, of an almost Hermitian manifold $(\bar M,\varphi, g)$
is said to be a \textit{pointwise $k$-slant submanifold of $\bar M$} ($k\in \mathbb N^*$) if there exist some orthogonal smooth regular distributions, $\mathcal D_{0},\mathcal D_1,\dots,\mathcal D_{k}$, satisfying:

(i) $TM=\mathcal D_{0}\oplus \mathcal D_1 \oplus \dots \oplus \mathcal D_{k}$;

(ii) $T\mathcal (D_i)\subseteq \mathcal D_i$ for any $i\in \{1,\dots, k\}$;

(iii) $\mathcal D_0$ is invariant (or even trivial) and $\mathcal D_{i}$, $i \in \{1,\dots, k\}$, are nontrivial, pointwise slant distributions with their slant functions $\theta_i$,
$\theta_i(p)\in (0,\frac{\pi}{2}]$ for $p\in M$ and $i \in \{1,\dots, k\}$, which are pointwise distinct (i.e., $\theta_i(p)\neq \theta_j(p)$ for all $p\in M$ and $i\neq j$).
\end{definition}

Conventionally, we will denote by $\theta_0$ the null angle, i.e., the "slant" angle of the invariant distribution $\mathcal D_0$ (if $\mathcal D_0$ is not trivial).

\smallskip

We notice \hspace{-2pt}\cite{latcu} that the condition (ii) from the Definition \ref{d1} is equivalent to: $\varphi(\mathcal D_i)\bot \mathcal D_j$ for all $i\neq j$, $i,j \in \{1,\dots, k\}$.

\smallskip

The slant functions $\theta_i$ are continuous (even smooth, under a certain assumption) \hspace{-2pt}\cite{latcu2}, and, for all $X\in \nolinebreak\Gamma(\mathcal D_i)\setminus \{0\}$ and $p\in M$, the angle $\theta_i(p)$ between $\varphi X_p$ and $T_pM$ coincides with the angle between $\varphi X_p$ and $(\mathcal D_i)_p$, and it satisfies
$$\cos\theta_i (p)\cdot \Vert\varphi X_p\Vert= \Vert TX_p \Vert.$$

If $\theta_i$ is constant for all $i \in \{1,\dots, k\}$, then the submanifold $M$ is called a \textit{$k$-slant submanifold} \hspace{-1pt}\cite{latcu}, so all the results for pointwise $k$-slant submanifolds are also valid for $k$-slant submanifolds.

\smallskip

Now, we will construct an example of a pointwise $k$-slant submanifold and one of a $k$-slant submanifold of a K\"{a}hler manifold.

\begin{example}\label{ex2}
Let us consider the K\"{a}hler manifold $\left(\mathbb{R}^{6k},\varphi,\left\langle \cdot,\cdot\right\rangle \right)$, $k\geq 2$,
with the standard Euclidean metric $\left\langle \cdot,\cdot\right\rangle$ and $\varphi$ given by
$$\varphi \left( \frac{\partial }{\partial u_{i}}\right) =-\frac{\partial }{%
\partial v_{i}},\quad \varphi \left( \frac{\partial }{\partial v_{i}}\right) =%
\frac{\partial }{\partial u_{i}},$$
where $(u_{1},v_{1},\dots, u_{3k},v_{3k})$ are the canonical coordinates in $\mathbb{R}^{6k}$.
We consider the submanifold $M$ of $\mathbb{R}^{6k}$ defined by the immersion
$$f :\{z=(x_{1},x_{2},y_{1},\dots,y_{2k-1})\in \mathbb{R}^{2k+1}: \Vert z\Vert<1, x_1>0, x_2>0\}\rightarrow \mathbb{R}^{6k},$$
\begin{align*}
f(x_{1},x_{2},y_{1},\dots,y_{2k-1}) &:=
\Big(x_{1}\cos y_{1}, x_{2}\cos y_{1}, x_{1}\sin y_{1}, x_{2}\sin y_{1}, x_{1}, x_{2},\\
&\hspace{24pt}y_2, \frac{1}{2}y_2^2, y_2+y_3,y_2-y_3, y_3, \frac{1}{2}y_{3}^2, \dots,\\
&\hspace{24pt}(k-1)y_{2k-2}, \frac{1}{2}y_{2k-2}^2, y_{2k-2}+y_{2k-1},\\
&\hspace{24pt}y_{2k-2}-y_{2k-1},(k-1)y_{2k-1}, \frac{1}{2}y_{2k-1}^2\Big).
\end{align*}%

Then, $TM$ is spanned by
\begin{align*}
X_{1} &=\cos y_{1}\frac{\partial }{\partial u_{1}}+\sin y_{1}\frac{\partial
}{\partial u_{2}}+\frac{\partial }{\partial u_{3}}, \\
X_{2} &=\cos y_{1}\frac{\partial }{\partial v_{1}}+\sin y_{1}\frac{\partial
}{\partial v_{2}}+\frac{\partial }{\partial v_{3}}, \\
X_{3} &=-x_{1}\sin y_{1}\frac{\partial }{\partial u_{1}}-x_{2}\sin y_{1}%
\frac{\partial }{\partial v_{1}}+x_{1}\cos y_{1}\frac{\partial }{\partial
u_{2}}+x_{2}\cos y_{1}\frac{\partial }{\partial v_{2}}, \\
X_{2i} &=(i-1)\frac{\partial }{\partial u_{3i-2}}+y_{2i-2}
\frac{\partial }{\partial v_{3i-2}}+\frac{\partial }{\partial
u_{3i-1}}+\frac{\partial }{\partial v_{3i-1}}, \\
X_{2i+1} &=\frac{\partial }{\partial u_{3i-1}}-\frac{\partial }{\partial v_{3i-1}}+(i-1)\frac{\partial }{\partial
u_{3i}}+y_{2i-1}\frac{\partial }{\partial v_{3i}}
\end{align*}%
for $i\in \{2,3,\dots, k\}$. We notice that $X_1, X_2,\dots, X_{2k+1}$ are mutually orthogonal.

Applying $\varphi $ to the base vector fields of $TM$, we get%
\begin{align*}
\varphi X_{1} &=-\cos y_{1}\frac{\partial }{\partial v_{1}}-\sin y_{1}\frac{\partial
}{\partial v_{2}}-\frac{\partial }{\partial v_{3}}, \\
\varphi X_{2} &=\cos y_{1}\frac{\partial }{\partial u_{1}}+\sin y_{1}\frac{\partial
}{\partial u_{2}}+\frac{\partial }{\partial u_{3}}, \\
\varphi X_{3} &=-x_{2}\sin y_{1}%
\frac{\partial }{\partial u_{1}}+x_{1}\sin y_{1}\frac{\partial }{\partial v_{1}}+x_{2}\cos y_{1}\frac{\partial }{\partial u_{2}}-x_{1}\cos y_{1}\frac{\partial }{\partial
v_{2}}, \\
\varphi X_{2i} &=y_{2i-2}
\frac{\partial }{\partial u_{3i-2}}-(i-1)\frac{\partial }{\partial v_{3i-2}}+\frac{\partial }{\partial
u_{3i-1}}-\frac{\partial }{\partial v_{3i-1}}, \\
\varphi X_{2i+1} &=-\frac{\partial }{\partial u_{3i-1}}-\frac{\partial }{\partial v_{3i-1}}+y_{2i-1}\frac{\partial }{\partial u_{3i}}-(i-1)\frac{\partial }{\partial
v_{3i}}
\end{align*}
for $i\in \{2,3,\dots, k\}$. We immediately obtain $\varphi X_{1}=-X_{2}$ and $\varphi X_{2}=X_{1}$; therefore, the distribution $\mathcal D_0=\Span \{X_{1},X_{2}\}$ is an invariant distribution.
Also, we have
\begin{align*}\frac{\left\vert \left\langle \varphi X_{2i},X_{2i+1}\right\rangle \right\vert }{%
\left\Vert \varphi X_{2i}\right\Vert \cdot\left\Vert X_{2i+1}\right\Vert } &=\frac{\left\vert \left\langle \varphi X_{2i+1},X_{2i}\right\rangle \right\vert }{\left\Vert \varphi X_{2i+1}\right\Vert \cdot\left\Vert X_{2i}\right\Vert }\\
&=\frac{2}{\sqrt{\left(2+(i-1)^2+y_{2i-2}^2\right)\left(2+(i-1)^2+y_{2i-1}^2\right)}};
\end{align*}
hence, since $X_{2i}$ and $X_{2i+1}$ are orthogonal, the distribution $\mathcal D_{i}=\Span\{X_{2i},X_{2i+1}\}$, $i\in \{2,3,\dots, k\}$, is a pointwise slant distribution with the slant function defined by $$\theta_{i}(f(x_{1},x_{2},y_{1},\dots,y_{2k-1})) =\arccos \frac{2}{\sqrt{\left(2+(i-1)^2+y_{2i-2}^2\right)\left(2+(i-1)^2+y_{2i-1}^2\right)}}.$$

By a direct computation, we find
$$\left\langle \varphi X_{3},X_{1}\right\rangle =\left\langle \varphi X_{3},X_{2}\right\rangle=\left\langle \varphi X_{3},X_{3}\right\rangle=\left\langle \varphi X_{3},X_{2i}\right\rangle=\left\langle \varphi X_{3},X_{2i+1}\right\rangle =0$$
for any $i\in \{2, 3,\dots, k\}$,
and so the distribution $\mathcal D_1=\Span\{X_{3}\}$ is an anti-invariant distribution.

Therefore, we can conclude that $M$ is a pointwise $k$-slant submanifold of the K\"{a}hler manifold $\left( \mathbb{R}^{6k},\varphi,\left\langle \cdot,\cdot\right\rangle \right)$.
\end{example}

\begin{example}\label{ex2}
The submanifold obtained by replacing $\frac{1}{2}y_j^2$ with $y_j$ for any $j\in \{2,3,\dots, 2k-1\}$, in the expression of the immersion from the previous example is a $k$-slant submanifold of the standard K\"{a}hler manifold $\left(\mathbb{R}^{6k},\varphi,\left\langle \cdot,\cdot\right\rangle \right)$, with the slant angles
$$\theta_{i} =\arccos \frac{2}{3+(i-1)^2}$$ for the slant distributions $\mathcal D_i$, $i\in \{2,3,\dots, k\}$, respectively, and $\theta_{1} = \frac{\pi}{2}$ for $\mathcal D_1$.
\end{example}

\bigskip

If $M$ is a pointwise $k$-slant submanifold of an almost Hermitian manifold $(\bar M,\varphi,g)$, then we have the following decompositions \hspace{-1pt}\cite{latcu} of the tangent and normal bundles of $M$:
$$TM=\oplus_{i=0}^k\mathcal D_i, \ \ T^{\bot}M=\oplus_{i=1}^kN(\mathcal D_i)\oplus H,$$ where $\varphi(H)=H$.
Let $P_i$ be the projection from $TM$ onto $\mathcal D_i$, $i \in \{0,\dots, k\}$, $Q_i$ be the projection from $T^{\bot}M$ onto $N(\mathcal D_i)$, $i \in \{1,\dots, k\}$, and
$Q_0$ be the projection from $T^{\bot}M$ onto $H$. Then, any $X\in \Gamma(TM)$ and $V\in  \Gamma(T^{\bot}M)$ can be written as:
$$X=\sum_{i=0}^kP_iX, \ \ V=\sum_{i=0}^kQ_iV.$$

By a direct computation, we immediately obtain (see also \cite{latcu}) the following

\begin{lemma} \label{le6}
If $M$ is a pointwise $k$-slant submanifold of an almost Hermitian manifold $(\bar M,\varphi,g)$, then:
\begin{align*}
(i)\ &
g(TX,Y)=-g(X,TY), \ \ g(NX,V)=-g(X,tV), \ \ g(nV,W)=-g(V,nW)
\intertext{for all $X,Y\in \Gamma (TM)$ and $V,W\in \Gamma (T^{\bot}M)$;}
(ii)\ & T^2=-\sum_{i=0}^k\cos ^{2}\theta_{i} \cdot P_i, \ \ n^2=-\sum_{i=0}^k\cos ^{2}\theta_{i} \cdot Q_i;\\
(iii)\ &
g(TX,TY)=\sum_{i=0}^k\cos^2 \theta_i \cdot g(P_iX,P_iY),
\\&  g(NX,NY)=\sum_{i=1}^k\sin^2 \theta_i \cdot g(P_iX,P_iY)
\intertext{for all $X,Y\in \Gamma (TM)$.}
\end{align*}
\end{lemma}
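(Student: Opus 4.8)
The plan is to treat the three parts in the order (i), (ii), (iii), since each later part feeds on the earlier ones, and to build everything on the two defining identities of the almost Hermitian structure together with the orthogonality of $TM$ and $T^{\bot}M$. Part (i) should be purely algebraic. To obtain the first identity I would write $g(TX,Y)=g(\varphi X-NX,Y)=g(\varphi X,Y)$, discarding $g(NX,Y)=0$ since $NX$ is normal and $Y$ tangent; applying the skew-symmetry $g(\varphi\cdot,\cdot)=-g(\cdot,\varphi\cdot)$ then gives $g(\varphi X,Y)=-g(X,\varphi Y)=-g(X,TY)$, dropping $g(X,NY)=0$. The remaining two identities follow by the identical bookkeeping, pairing a tangent with a normal field and a normal with a normal field respectively. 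In particular (i) shows that $T$ is skew-adjoint, hence $T^{2}$ is self-adjoint, which I will use repeatedly below.

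For part (ii) the core is the eigenvalue identity $T^{2}=-\cos^{2}\theta_{i}\cdot I$ on each $\mathcal D_{i}$. Starting from the slant relation $\cos\theta_{i}\cdot\Vert\varphi X\Vert=\Vert TX\Vert$ and the isometry property $\Vert\varphi X\Vert=\Vert X\Vert$, I would square to get $g(TX,TX)=\cos^{2}\theta_{i}\,g(X,X)$ for $X\in\mathcal D_{i}$; combining with skew-adjointness, $g(TX,TX)=-g(X,T^{2}X)$, yields $g(X,T^{2}X)=-\cos^{2}\theta_{i}\,g(X,X)$. The hard part will be upgrading this quadratic identity to the operator identity: since $T^{2}$ is self-adjoint and, by condition (ii) of Definition \ref{d1}, preserves $\mathcal D_{i}$, I would polarize — the self-adjoint operator $T^{2}+\cos^{2}\theta_{i}I$ has vanishing quadratic form on $\mathcal D_{i}$, hence vanishes there — giving $T^{2}=-\cos^{2}\theta_{i}I$ on $\mathcal D_{i}$ (the invariant case $\mathcal D_{0}$ being immediate, as $\theta_{0}=0$ and $T^{2}=\varphi^{2}=-I$ there). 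Summing over the orthogonal decomposition $TM=\oplus_{i}\mathcal D_{i}$ then gives the stated formula. For $n^{2}$, I would first extract from the normal component of $\varphi^{2}X=-X$ the relation $nNX=-NTX$; iterating this yields $n^{2}(NX)=N(T^{2}X)$, and feeding in $T^{2}X=-\cos^{2}\theta_{i}X$ produces $n^{2}=-\cos^{2}\theta_{i}I$ on $N(\mathcal D_{i})$, whose elements are exactly of the form $NX$ with $X\in\mathcal D_{i}$, while $n^{2}=-I$ on $H$ since $\varphi(H)=H$. Assembling over the normal decomposition yields the formula for $n^{2}$.

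Finally, part (iii) should be a clean consequence of (i) and (ii). For $g(TX,TY)$ I would use skew-adjointness to write $g(TX,TY)=-g(X,T^{2}Y)$, substitute the formula for $T^{2}$ from (ii), and collapse $g(X,P_{i}Y)$ to $g(P_{i}X,P_{i}Y)$ by orthogonality of the distributions. For $g(NX,NY)$, the isometry property gives $g(X,Y)=g(\varphi X,\varphi Y)=g(TX,TY)+g(NX,NY)$, the tangent–normal cross terms dropping out; subtracting the expression just found for $g(TX,TY)$ from $g(X,Y)=\sum_{i}g(P_{i}X,P_{i}Y)$ leaves $\sum_{i}(1-\cos^{2}\theta_{i})\,g(P_{i}X,P_{i}Y)=\sum_{i=1}^{k}\sin^{2}\theta_{i}\,g(P_{i}X,P_{i}Y)$, the $i=0$ term vanishing since $\theta_{0}=0$. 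The only genuinely non-formal ingredient in the whole argument is the polarization step in (ii); everything else is orthogonal-projection bookkeeping on the almost Hermitian identities.
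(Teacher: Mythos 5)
Your proof is correct, and it is essentially the computation the paper itself invokes: the paper states this lemma without proof, saying only that it follows "by a direct computation" (citing \cite{latcu}), and your argument is precisely that computation carried out in full — (i) from orthogonality of tangent and normal components plus skew-symmetry of $g(\varphi\cdot,\cdot)$, (ii) from the quadratic slant identity upgraded by polarization (valid since $T^2+\cos^2\theta_i I$ is self-adjoint and preserves $\mathcal D_i$) together with the relation $nNX=-NTX$, and (iii) formally from (i), (ii), and the isometry property of $\varphi$. No gaps; the polarization step, which is the only point a "direct computation" glosses over, is handled properly.
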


\section{On the integrability of the component \\distributions}

For a pointwise $k$-slant submanifold $M$ of an almost Hermitian manifold $(\bar M,\varphi,g)$, we will denote $TM=\oplus_{i=0}^k\mathcal D_{i}$.
Similarly to the almost contact metric case \hspace{-1pt}\cite{blla}, the integrability of the component distributions $\mathcal D_i$, $i \in \{0,\dots, k\}$, can be characterized in the K\"{a}hler case as follows.

We recall that a distribution $\mathcal D$ is called \textit{integrable} if $[X,Y]\in \Gamma(\mathcal D)$ for all $X,Y\in \Gamma (\mathcal D)$, and \textit{completely integrable} if $\nabla_XY\in \Gamma(\mathcal D)$ for all $X,Y\in\Gamma(\mathcal D)$.

\begin{theorem}
If $M$ is a pointwise $k$-slant submanifold of a K\"{a}hler manifold $(\bar M,\varphi,g)$, then:

(i) for $i \in \{0,\dots, k\}$, $\mathcal D_i$ is an integrable distribution if and only if
$$g(X,\nabla_YZ)=g(Y,\nabla_XZ)$$ for all $X,Y\in \Gamma(\mathcal D_i)$ and $Z\in \Gamma(\mathcal D_j)$, $j \in \{0,\dots, k\}$ with $j\neq i$;

(ii) $\mathcal D_0$ is an integrable distribution if and only if
$$h(X,TY)=h(TX,Y)$$
for all $X,Y\in \Gamma (\mathcal D_0)$;

(iii) for $i \in \{0,\dots, k\}$ with $\theta_j(p)\neq\frac{\pi}{2}$ for all $p\in M$ and $j\neq i$, $j\in\{0,\dots,k\}$,
$\mathcal D_{i}$ is an integrable distribution if and only if
$$\nabla _{X}TY-\nabla _{Y}TX+A_{NX}Y-A_{NY}X\in \Gamma(\mathcal D_i)$$
for all $X,Y\in \Gamma (\mathcal D_{i})$.
\end{theorem}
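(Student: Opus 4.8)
The plan is to handle the three parts in turn, in each case converting a statement about the bracket $[X,Y]=\nabla_XY-\nabla_YX$ into the stated condition, using the orthogonality of the decomposition $TM=\oplus_{i=0}^k\mathcal D_i$ together with the K\"{a}hler identity $\bar\nabla\varphi=0$ and the Gauss--Weingarten formulas.

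For (i) I would argue purely formally. Since the $\mathcal D_j$ are mutually orthogonal and span $TM$, the distribution $\mathcal D_i$ is integrable exactly when $g([X,Y],Z)=0$ for all $X,Y\in\Gamma(\mathcal D_i)$ and all $Z\in\Gamma(\mathcal D_j)$ with $j\neq i$. Writing $[X,Y]=\nabla_XY-\nabla_YX$ and using that $\nabla$ is metric, together with $g(Y,Z)=g(X,Z)=0$, I rewrite $g(\nabla_XY,Z)=-g(Y,\nabla_XZ)$ and $g(\nabla_YX,Z)=-g(X,\nabla_YZ)$, so that $g([X,Y],Z)=g(X,\nabla_YZ)-g(Y,\nabla_XZ)$. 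The vanishing of this quantity for all admissible $Z$ is precisely the asserted identity, and this part needs only torsion-freeness and metric compatibility.

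For (ii) the idea is to extract a \emph{normal}-component identity. Applying $\bar\nabla_X(\varphi Y)=\varphi(\bar\nabla_XY)$ with $X,Y\in\Gamma(\mathcal D_0)$ (so $NY=0$ and $\varphi Y=TY$ is tangent), I split both sides via the Gauss formula and the decompositions $\varphi=T+N$, $\varphi=t+n$; comparing normal parts gives $h(X,TY)=N(\nabla_XY)+n(h(X,Y))$. Subtracting the same relation with $X,Y$ interchanged and using the symmetry of $h$ yields the key identity $h(X,TY)-h(TX,Y)=N([X,Y])$. Finally, Lemma~\ref{le6}(iii) gives $\Vert NW\Vert^2=\sum_{j=1}^k\sin^2\theta_j\,\Vert P_jW\Vert^2$, and since every $\theta_j\in(0,\frac{\pi}{2}]$ forces $\sin\theta_j\neq0$, we get $NW=0$ if and only if $W\in\Gamma(\mathcal D_0)$. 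Taking $W=[X,Y]$ converts $N([X,Y])=0$ into $[X,Y]\in\Gamma(\mathcal D_0)$, which gives the equivalence.

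For (iii) I would run the analogous computation but retain the \emph{tangential} part. For $X,Y\in\Gamma(\mathcal D_i)$, expanding $\bar\nabla_X(\varphi Y)=\varphi(\bar\nabla_XY)$ with $\varphi Y=TY+NY$, applying Gauss to $TY$ and Weingarten to $NY$, and comparing tangential components, I obtain $\nabla_X(TY)-A_{NY}X=T(\nabla_XY)+t(h(X,Y))$. Antisymmetrizing in $X,Y$ and cancelling the symmetric term $t(h(X,Y))$ produces $\nabla_XTY-\nabla_YTX+A_{NX}Y-A_{NY}X=T([X,Y])$, so the displayed expression is exactly $T[X,Y]$. Since $T(\mathcal D_j)\subseteq\mathcal D_j$, integrability of $\mathcal D_i$ immediately gives $T[X,Y]\in\Gamma(\mathcal D_i)$. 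The converse is where I expect the one genuinely non-routine step: from $T[X,Y]\in\Gamma(\mathcal D_i)$ I must recover $[X,Y]\in\Gamma(\mathcal D_i)$. Writing $[X,Y]=\sum_jP_j[X,Y]$ and using $T(\mathcal D_j)\subseteq\mathcal D_j$, the hypothesis is equivalent to $T(P_j[X,Y])=0$ for every $j\neq i$; but by Lemma~\ref{le6}(ii) we have $T^2=-\cos^2\theta_j$ on $\mathcal D_j$, so $T$ is injective on $\mathcal D_j$ precisely when $\cos\theta_j\neq0$, i.e. when $\theta_j\neq\frac{\pi}{2}$. This is exactly the restriction imposed on the indices $j\neq i$, under which $T(P_j[X,Y])=0$ forces $P_j[X,Y]=0$ and hence $[X,Y]\in\Gamma(\mathcal D_i)$. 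Thus the main obstacle is this injectivity argument and the accompanying role of the condition $\theta_j\neq\frac{\pi}{2}$; everything else is bookkeeping with the K\"{a}hler identity and the Gauss--Weingarten equations.
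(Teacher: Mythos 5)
Your proposal is correct and follows essentially the same route as the paper: the metric-compatibility argument for (i), the normal component of the K\"{a}hler identity antisymmetrized to get $h(X,TY)-h(TX,Y)=N([X,Y])$ for (ii), and the tangential component giving $T[X,Y]$ equal to the displayed expression for (iii). The only difference is that you make explicit two points the paper leaves implicit -- the injectivity of $N$ on $\oplus_{i=1}^k\mathcal D_i$ via Lemma \ref{le6}(iii) and the injectivity of $T$ on $\mathcal D_j$ when $\theta_j\neq\frac{\pi}{2}$ via Lemma \ref{le6}(ii) -- which strengthens rather than alters the argument.
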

\begin{proof}
We have
$$
\bar{\nabla }_{X}\varphi Y=\varphi (\bar{\nabla }_{X}Y)
$$
for all $X,$ $Y\in \Gamma (TM)$, and, using Gauss and Weingarten equations,
we get
\begin{align*}
\nabla _{X}TY+h(X,TY)&=T(\nabla_XY)+N(\nabla_XY)+A_{NY}X
-\nabla_X^{\bot}NY\\
&\hspace{12pt}+th(X,Y)+nh(X,Y)\\
&=\sum_{i=0}^kTP_i(\nabla_XY)+\sum_{i=1}^kNP_i(\nabla_XY)
+A_{NY}X-\nabla_X^{\bot}NY\\
&\hspace{12pt}+th(X,Y)+nh(X,Y).
\end{align*}

Identifying the tangent and the normal components in the previous relation, we obtain:
\begin{align*}
\nabla _{X}TY&=\sum_{i=0}^kTP_i(\nabla_XY)+A_{NY}X+th(X,Y),\\
h(X,TY)&=\sum_{i=1}^kNP_i(\nabla_XY)-\nabla_X^{\bot}NY+nh(X,Y)
\end{align*}
for all $X,$ $Y\in \Gamma (TM).$

The distribution $\mathcal D_i$ is integrable if and only if $g([X,Y],Z)=0$ for all $X,Y\in \Gamma(\mathcal D_i)$ and $Z\in \Gamma(\mathcal D_j)$, $j \in \{0,\dots, k\}$ with $j\neq i$.
Since $g([X,Y],Z)=g(X,\nabla_YZ)-g(Y,\nabla_XZ)$, we get (i).

Let $X,Y\in \Gamma (\mathcal D_0)$. Then, $NX=NY=0$, and we obtain
$$h(X,TY)-h(TX,Y)=\sum_{i=1}^kNP_i[X,Y].$$

If the distribution $\mathcal D_0$ is integrable, then $[X,Y]\in \Gamma(\mathcal D_0)$; therefore,
\linebreak
$P_i[X,Y]=0$ for all $i \in \{1,\dots, k\}$, hence the conclusion.
Conversely, if
\linebreak
$h(X,TY)=h(TX,Y)$ for all $X,Y\in \Gamma (\mathcal D_0)$, then $\sum_{i=1}^kNP_i[X,Y]=0$, hence $P_i[X,Y]=0$ for all
$i \in \{1,\dots, k\}$, and we get (ii).

Let $X,Y\in \Gamma (\mathcal D_i)$. Then, we obtain
$$T[X,Y]=T(\nabla_XY)-T(\nabla_YX)=\nabla _{X}TY-\nabla_{Y}TX+A_{NX}Y-A_{NY}X.$$
If the distribution $\mathcal D_{i}$ is integrable, then $[X,Y]\in \Gamma(\mathcal D_i)$; therefore, \linebreak
$T[X,Y]\in \Gamma(\mathcal D_i)$, hence the conclusion.
Conversely, if $\nabla _{X}TY-\nabla _{Y}TX+A_{NX}Y-A_{NY}X\in \Gamma(\mathcal D_i)$, then $T[X,Y]\in \Gamma(\mathcal D_i)$, and, since $\theta_j(p)\neq\frac{\pi}{2}$ for all $p\in M$ and $j\neq i$, we get (iii).
\end{proof}

In particular, for a totally geodesic submanifold (i.e., for $h=0$), we deduce

\begin{corollary}
If $M$ is a totally geodesic pointwise $k$-slant submanifold of a K\"{a}hler manifold $(\bar M,\varphi,g)$, then:

(i) $\mathcal D_0$ is an integrable distribution;

(ii) for $i \in \{0,\dots, k\}$ with $\theta_j(p)\neq\frac{\pi}{2}$ for all $p\in M$ and $j\neq i$, $j\in\{0,\dots,k\}$,
$\mathcal D_{i}$ is an integrable distribution if and only if
$$\nabla _{X}TY-\nabla _{Y}TX\in \Gamma(\mathcal D_i)$$
for all $X,Y\in \Gamma (\mathcal D_{i})$.
\end{corollary}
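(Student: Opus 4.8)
The plan is to derive both statements directly from the preceding Theorem by exploiting the defining condition $h=0$ of a totally geodesic submanifold, so that essentially no fresh computation is required; the work has already been done in the Theorem, and the corollary is a clean specialization.

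For part (i), I would invoke part (ii) of the Theorem, which asserts that $\mathcal{D}_0$ is integrable if and only if $h(X,TY)=h(TX,Y)$ for all $X,Y\in\Gamma(\mathcal{D}_0)$. Since $h\equiv 0$, both sides vanish identically and the equality holds trivially; hence $\mathcal{D}_0$ is integrable with no further hypothesis needed.

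For part (ii), I would first record that the vanishing of $h$ forces the shape operator to vanish as well. Indeed, from the relation $g(h(X,Y),V)=g(A_VX,Y)$ together with $h=0$ we obtain $g(A_VX,Y)=0$ for all $Y\in\Gamma(TM)$, and the nondegeneracy of $g$ then yields $A_V=0$ for every $V\in\Gamma(T^{\bot}M)$. Consequently the terms $A_{NX}Y$ and $A_{NY}X$ appearing in part (iii) of the Theorem both vanish, so the integrability criterion $\nabla_XTY-\nabla_YTX+A_{NX}Y-A_{NY}X\in\Gamma(\mathcal{D}_i)$ collapses to $\nabla_XTY-\nabla_YTX\in\Gamma(\mathcal{D}_i)$. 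Applying part (iii) of the Theorem under the stated restriction $\theta_j(p)\neq\frac{\pi}{2}$ for $j\neq i$ then delivers the asserted equivalence.

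The argument presents no genuine obstacle; the only point demanding an explicit line of justification is the passage from $h=0$ to $A=0$, which rests on the nondegeneracy of the induced metric $g$ via the shape-operator relation. Everything else is an immediate reading-off of the Theorem in the special case $h=0$.
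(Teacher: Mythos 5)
Your proposal is correct and follows exactly the route the paper intends: the corollary is stated as an immediate specialization of the preceding Theorem to the case $h=0$, with part (i) read off from Theorem (ii) and part (ii) from Theorem (iii) after noting that $h=0$ forces $A_V=0$ via $g(h(X,Y),V)=g(A_VX,Y)$. No discrepancy with the paper's (implicit) argument.
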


\smallskip

For a submanifold $M$ of an almost Hermitian manifold $(\bar M,\varphi,g)$ defined by an injective immersion,
by using the Gauss and Weingarten equations,
for any $X,Y\in \Gamma(TM)$ and $V\in \Gamma(T^{\bot}M)$, we obtain:
\begin{align*}
(\bar \nabla _X\varphi)Y&=\nabla_XTY-T(\nabla_XY)-A_{NY}X-th(X,Y)\\
&\hspace{12pt}+\nabla^{\bot}_XNY-N(\nabla_XY)+h(X,TY)-nh(X,Y), \\
(\bar \nabla _X\varphi)V&=\nabla_XtV-t(\nabla^{\bot}_XV)-A_{nV}X+T(A_VX)\\
&\hspace{12pt}+\nabla^{\bot}_XnV-n(\nabla^{\bot}_XV)+h(X,tV)+N(A_VX). \notag
\end{align*}
Denoting:
\begin{align*}
(\nabla_XT)Y&:=\nabla_XTY-T(\nabla_XY), \hspace{-20pt} &(\nabla_XN)Y&:=\nabla^{\bot}_XNY-N(\nabla_XY),\\
(\nabla_Xt)V&:=\nabla_XtV-t(\nabla^{\bot}_XV), \hspace{-20pt} &(\nabla_Xn)V&:=\nabla^{\bot}_XnV-n(\nabla^{\bot}_XV)
\end{align*}
for all $X,Y\in \Gamma(TM)$ and $V\in \Gamma(T^{\bot}M)$, by identifying the tangent and the normal components in the K\"{a}hler case, we get the following

\begin{lemma} \label{l}
If $(\bar M,\varphi,g)$ is a K\"{a}hler manifold, then, for all $X,Y\in \Gamma(TM)$ and $V\in \Gamma(T^{\bot}M)$,
we have:

(i) $(\nabla_XT)Y=A_{NY}X+th(X,Y)$;

(ii) $(\nabla_XN)Y=-h(X,TY)+nh(X,Y)$;

(iii) $(\nabla_Xt)V=A_{nV}X-T(A_VX)$;

(iv) $(\nabla_Xn)V=-h(X,tV)-N(A_VX)$.

Moreover, if $M$ is totally geodesic, then we get: ${\nabla}T=0$, $\nabla N=0$, $\nabla t=0$, and $\nabla n=0$.
\end{lemma}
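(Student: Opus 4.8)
The plan is to start from the fundamental covariant derivative identity for the Kähler structure, namely $\bar\nabla_X(\varphi W)=\varphi(\bar\nabla_X W)$, which holds for every section $W$ of $T\bar M$ because $\bar\nabla\varphi=0$. I would apply this once with $W=Y\in\Gamma(TM)$ and once with $W=V\in\Gamma(T^{\bot}M)$. In each case I expand $\varphi W$ using the tangential/normal splitting $\varphi Y=TY+NY$ and $\varphi V=tV+nV$, and then expand every $\bar\nabla$ appearing on both sides through the Gauss and Weingarten equations. Concretely, on a term like $\bar\nabla_X TY$ I use the Gauss equation (since $TY$ is tangent) to get $\nabla_X TY+h(X,TY)$, while on $\bar\nabla_X NY$ I use the Weingarten equation (since $NY$ is normal) to get $-A_{NY}X+\nabla^{\bot}_X NY$; on the right-hand side I apply $\varphi$ to $\bar\nabla_X Y=\nabla_X Y+h(X,Y)$ and split each piece via $\varphi(\nabla_X Y)=T(\nabla_X Y)+N(\nabla_X Y)$ and $\varphi(h(X,Y))=t\,h(X,Y)+n\,h(X,Y)$.

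Having assembled both sides, I would collect everything on one side to read off the two displayed expansions for $(\bar\nabla_X\varphi)Y$ and $(\bar\nabla_X\varphi)V$ already recorded in the excerpt just before the lemma. The decisive step is then simply to invoke the Kähler hypothesis in the strong form $(\bar\nabla_X\varphi)W=0$: this forces each of those two vector-valued expressions to vanish identically. Next I split each vanishing expression into its tangential and normal components, which are orthogonal and therefore must vanish separately. Matching the tangential part of the $Y$-equation against the definition $(\nabla_X T)Y=\nabla_X TY-T(\nabla_X Y)$ yields (i), and matching its normal part against $(\nabla_X N)Y=\nabla^{\bot}_X NY-N(\nabla_X Y)$ yields (ii); the analogous tangential and normal splits of the $V$-equation, read against the definitions of $(\nabla_X t)V$ and $(\nabla_X n)V$, give (iii) and (iv).

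For the final assertion I would specialize to $h=0$. In the four identities just proved, setting $h=0$ kills the $th(X,Y)$ and $nh(X,Y)$ terms outright in (i) and (ii), and kills the shape-operator terms as well, since $g(A_V X,Y)=g(h(X,Y),V)=0$ for all $Y$ forces $A_V=0$ identically. Thus the right-hand sides of (i)–(iv) all become zero, giving $\nabla T=0$, $\nabla N=0$, $\nabla t=0$, $\nabla n=0$ directly.

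I do not anticipate a genuine obstacle here: the entire computation is a bookkeeping exercise in applying Gauss and Weingarten and then splitting into orthogonal components. The only place demanding care is the sign and component conventions, in particular keeping straight that $TY,\,tV$ are tangential while $NY,\,nV$ are normal, so that the right equation (Gauss versus Weingarten) is used on each term and the tangential/normal identification is done consistently; a sign slip there is the most likely source of error, but it is purely mechanical rather than conceptual.
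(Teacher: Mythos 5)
Your proposal is correct and follows essentially the same route as the paper: the authors derive exactly the two displayed expansions of $(\bar\nabla_X\varphi)Y$ and $(\bar\nabla_X\varphi)V$ via the Gauss and Weingarten equations, set them to zero using the K\"{a}hler condition, and identify tangential and normal components to read off (i)--(iv), with the totally geodesic case following from $h=0$ (which also forces $A_V=0$ through $g(A_VX,Y)=g(h(X,Y),V)$). Nothing in your argument deviates from or falls short of the paper's proof.
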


\smallskip

We recall that a $(1,1)$-tensor field $J$ on $M$ is called \textit{parallel} if $(\nabla_X J)Y=0$ for all $X,Y\in \Gamma(TM)$.
We will characterize the property of $T$ and $N$ to be parallel tensor fields as follows.

\begin{proposition}\label{k}
If $(\bar M,\varphi,g)$ is a K\"{a}hler manifold, then:

(i) $\nabla T=0$ is equivalent to $A_{NY}X=A_{NX}Y$ for all $X,Y\in \Gamma(TM)$;

(ii) $\nabla N=0$ is equivalent to any of the following assertions:

\quad (1) $h(TX,Y)=h(X,TY)$ for all $X,Y\in \Gamma(TM)$;

\quad (2) $T(A_VX)=-A_V(TX)$ for all $X\in \Gamma(TM)$ and $V\in \Gamma(T^{\bot}M)$;

\quad (3) $A_{nV}X=-A_V(TX)$ for all $X\in \Gamma(TM)$ and $V\in\Gamma(T^{\bot}M)$.
\end{proposition}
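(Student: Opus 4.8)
The plan is to reduce everything to Lemma \ref{l}(i)--(ii) together with two ``adjoint dictionaries'' coming from $g(h(X,Y),V)=g(A_VX,Y)$ and the skew relations of Lemma \ref{le6}(i). Concretely, for all $X,Y\in\Gamma(TM)$, all tangent $Z$ and all normal $V$, I will use $g(th(X,Y),Z)=-g(h(X,Y),NZ)=-g(A_{NZ}X,Y)$ and $g(nh(X,Y),V)=-g(h(X,Y),nV)=-g(A_{nV}X,Y)$, both obtained by writing $g(\varphi h(X,Y),\cdot)=-g(h(X,Y),\varphi\,\cdot)$ and dropping the component that pairs trivially with the normal vector $h(X,Y)$. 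I will also use repeatedly that $A_V$ and $h$ are symmetric and that $g(W,TY)=-g(TW,Y)$.

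For (i) I start from Lemma \ref{l}(i), so that $\nabla T=0$ is equivalent to $A_{NY}X=-th(X,Y)$ for all $X,Y$. The forward implication is immediate: since $h$ is symmetric and $t$ is linear, $th(X,Y)=th(Y,X)$, whence $A_{NY}X=-th(X,Y)=-th(Y,X)=A_{NX}Y$. For the converse I assume $A_{NY}X=A_{NX}Y$ and show that $(\nabla_XT)Y=A_{NY}X+th(X,Y)$ vanishes by pairing with an arbitrary tangent $Z$: the first dictionary gives $g(th(X,Y),Z)=-g(A_{NZ}X,Y)$, while the hypothesis together with the symmetry of the shape operators yields the chain $g(A_{NY}X,Z)=g(A_{NX}Y,Z)=g(A_{NX}Z,Y)=g(A_{NZ}X,Y)$. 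Adding these two lines gives $g((\nabla_XT)Y,Z)=0$ for every $Z$, hence $\nabla T=0$.

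For (ii) I first note, by Lemma \ref{l}(ii), that $\nabla N=0$ is equivalent to $h(X,TY)=nh(X,Y)$ for all $X,Y$, a condition I call $(\star)$. The equivalence $(1)\Leftrightarrow(2)$ is pure duality: pairing $h(TX,Y)=h(X,TY)$ with $V$ and using $g(A_VX,TY)=-g(T(A_VX),Y)$ turns it into $A_V(TX)=-T(A_VX)$. For $(3)\Leftrightarrow\nabla N=0$ I pair (3) with a tangent $Y$; the second dictionary rewrites the left side as $-g(nh(X,Y),V)$ and the shape-operator relation rewrites the right side as $-g(h(TX,Y),V)$, so (3) is equivalent to $nh(X,Y)=h(TX,Y)$, which by the symmetry of $h$ and of $nh$ is in turn equivalent to $(\star)$. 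Finally $\nabla N=0\Rightarrow(1)$ follows by symmetry: applying $(\star)$ to the pairs $(X,Y)$ and $(Y,X)$ gives $h(X,TY)=nh(X,Y)=nh(Y,X)=h(Y,TX)=h(TX,Y)$.

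The step I expect to be the genuine obstacle is the reverse implication from the symmetric-type condition (1) (equivalently (2)) back to the full vanishing $\nabla N=0$. A direct computation gives $(\nabla_XN)Y-(\nabla_YN)X=h(TX,Y)-h(X,TY)$, so (1) is precisely the vanishing of the antisymmetric part of $\nabla N$, whereas $(\star)$ demands in addition that its symmetric part vanish; after the reductions above one is left to produce $nh(X,Y)=h(X,TY)$ from (2) alone, and here the adjoint manipulations become self-referential, returning only $(\star)\Leftrightarrow(3)$. To break this circularity I would feed in the structural identities obtained from $\varphi^2=-I$, namely $T^2+tN=-I$ on $TM$, $nN=-NT$, and their normal-bundle counterparts, through Lemma \ref{l}(iii)--(iv), with the aim of recovering the missing symmetric information; establishing that this indeed forces the symmetric part of $\nabla N$ to vanish is the heart of the matter.
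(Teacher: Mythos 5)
Everything you actually prove is correct, and it comprises exactly the part of the Proposition that is true. For (i), your direct verification of the converse is a sound alternative to the paper's argument, which instead observes that each $\nabla_XT$ is a skew-symmetric endomorphism of $TM$, so the Codazzi identity $(\nabla_XT)Y=(\nabla_YT)X$ forces $\nabla T=0$ by cycling the trilinear form $g((\nabla_XT)Y,Z)$. For (ii), your dictionaries correctly give $(1)\Leftrightarrow(2)$, $\nabla N=0\Leftrightarrow(\star)\Leftrightarrow(3)$ (where $(\star)$ is your condition $h(X,TY)=nh(X,Y)$ for all $X,Y$), and $\nabla N=0\Rightarrow(1)$. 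Note that your two-sided proof of $\nabla N=0\Leftrightarrow(3)$ is more than the paper itself establishes: in the paper, ``$(3)\Rightarrow\nabla N=0$'' is only obtained through the chain $(3)\Rightarrow(1)\Rightarrow(2)\Rightarrow\nabla N=0$.

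The link you could not supply, $(1)$ (equivalently $(2)$) $\Rightarrow\nabla N=0$, is precisely where the paper's proof is flawed, and in fact that implication is false; the gap you flagged cannot be closed by any means, including the structural identities from $\varphi^2=-I$ that you propose. The paper's justification is that $(\nabla_XN)Y=(\nabla_YN)X$, ``since $N$ is skew-symmetric, is equivalent to $\nabla N=0$''; but the cycling argument that legitimately proves this for $T$ requires all three slots of the trilinear form to be tangential, whereas $g((\nabla_XN)Y,V)$ has a normal slot, and $g(NX,V)=-g(X,tV)$ is a relation between $\nabla N$ and $\nabla t$, not an internal symmetry that can be cycled. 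Your diagnosis is exactly right: $(1)$ is only the vanishing of the part of $(X,Y)\mapsto(\nabla_XN)Y$ antisymmetric in $(X,Y)$, and the symmetric part can survive. Concretely, take $\bar M=\mathbb{R}^4$ with coordinates $(u_1,v_1,u_2,v_2)$, the Euclidean metric, $\varphi\partial_{u_j}=-\partial_{v_j}$, $\varphi\partial_{v_j}=\partial_{u_j}$, and let $M$ be the circle $\gamma(s)=(\cos s,0,\sin s,0)$, which lies in the totally real plane $\{v_1=v_2=0\}$ and is a $1$-slant submanifold with $\theta_1=\frac{\pi}{2}$. Then $\varphi\gamma'\perp\gamma'$, so $T\equiv0$ and $(1)$, $(2)$ hold vacuously; but $h(\gamma',\gamma')=\gamma''$ and $\varphi\gamma''\perp\gamma'$, so by Lemma \ref{l} (ii), $(\nabla_{\gamma'}N)\gamma'=nh(\gamma',\gamma')=\varphi\gamma''\neq0$, and $(3)$ fails as well: for $V=\varphi\gamma''$ one has $nV=-\gamma''$, hence $g(A_{nV}\gamma',\gamma')=-\Vert\gamma''\Vert^2\neq0$ while $A_V(T\gamma')=0$. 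So the correct statement of (ii) --- which is exactly what your write-up proves --- is: $\nabla N=0\Leftrightarrow(3)$, and this implies $(1)\Leftrightarrow(2)$, but not conversely. What you called the ``heart of the matter'' is not a missing idea on your side but an error in the Proposition.
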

\begin{proof}
Since $T$ is skew-symmetric, the condition $\nabla T=0$ is equivalent to $(\nabla_XT)Y=(\nabla_YT)X$ for all $X,Y\in \nolinebreak\Gamma(TM)$, which, by means of Lemma \ref{l} (i), is equivalent to $A_{NY}X=A_{NX}Y$ for all $X,Y\in\Gamma(TM)$, and we get (i).

We shall prove now:
$$\nabla N=0 \Longrightarrow \ (1) \ \Longrightarrow \ (2) \ \Longrightarrow \nabla N=0 \ \ \textrm{and} \ \ \nabla N=0 \Longrightarrow \ (3) \ \Longrightarrow \ (1).$$

If $\nabla N=0$, we immediately get (1) from Lemma \ref{l} (ii). So,
\begin{align*}g(A_V(TX),Y)&=g(h(TX,Y),V)=g(h(X,TY),V)\\
&=g(A_VX, TY)=-g(T(A_{V}X),Y)
\end{align*}
for all $X,Y\in\Gamma(TM)$ and $V\in\Gamma(T^{\bot}M)$,
and we obtain (2).
Further,
\begin{align*}g(h(X,TY),V)&=g(A_{V}TY,X)=-g(T(A_VY), X)\\
&=g(A_VY,TX)=g(h(TX,Y),V)
\end{align*}
for all $X,Y\in \Gamma(TM)$ and $V\in\Gamma(T^{\bot}M)$; hence, from Lemma \ref{l} (ii), we obtain $(\nabla_XN)Y=(\nabla_YN)X$ for all $X,Y\in \Gamma(TM)$, which,
since $N$ is skew-symmetric, is equivalent to $\nabla N=0$.

Also, $\nabla N=0$ is equivalent to $h(X,TY)=nh(X,Y)$ for all $X,Y\in\Gamma(TM)$; hence,
\begin{align*}g(A_V(TY),X)&=g(h(X,TY),V)=g(nh(X,Y),V)\\
&=-g(h(X,Y),nV)=-g(A_{nV}Y,X)
\end{align*}
for all $X,Y\in\Gamma(TM)$ and $V\in\Gamma(T^{\bot}M)$, and we deduce (3).
Further,
\begin{align*}g(h(TX,Y),V)&=g(A_V(TX),Y)=-g(A_{nV}X,Y)=-g(h(X,Y),nV)\\
&=-g(A_{nV}Y,X)=g(A_V(TY),X)=g(h(X,TY),V)
\end{align*}
for all $X,Y\in\Gamma(TM)$ and $V\in\Gamma(T^{\bot}M)$, hence (1).
\end{proof}

\begin{theorem}
Let $M$ be a pointwise $k$-slant submanifold of a K\"{a}hler manifold $(\bar M,\varphi,g)$.
If $\nabla T=0$, then:

(i) $\mathcal D_0$ and $\oplus_{i=1}^k \mathcal D_i$ are completely integrable distributions;

(ii) for $i\in \{0,\dots,k\}$ with $\theta_j(p)\neq \frac{\pi}{2}$ for all $p\in M$ and $j\neq i$, $j\in \{0,\dots,k\}$, $\mathcal D_i$ is an integrable distribution if and only if, for all $X,Y\in \Gamma(\mathcal D_i)$,
$$\nabla_XTY-\nabla_YTX\in \Gamma(\mathcal D_i);$$

(iii) either $M$ is a $(\mathcal D_0,\mathcal D_0)$-totally geodesic submanifold of $\bar M$ (i.e.,
\linebreak
 $h(X,Y)=0$ for all $X,Y\in \Gamma(\mathcal D_0)$), or $(-1)$ is an eigenvalue of $n^2$, and, for $X,Y\in \Gamma(\mathcal D_0)$, any nonzero $h(X,Y)$ is an eigenvector for it.
\end{theorem}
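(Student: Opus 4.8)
The plan is to derive all three parts from the single hypothesis $\nabla T=0$ together with the identities already established, chiefly Lemma \ref{l}~(i), Proposition \ref{k}~(i), Lemma \ref{le6}, and the integrability criterion from the first Theorem of this section. The unifying observation is that $\nabla T=0$ forces $T^{2}$ to be a parallel field, and that the eigenvalue $-1$ attached to $\mathcal D_0$ (since $\theta_0=0$) is a priori constant, which lets me treat $\mathcal D_0$ without ever differentiating the slant functions.

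For part (i) I would first note that $\nabla T=0$ implies $\nabla T^{2}=0$, because $(\nabla_X T^{2})Y=(\nabla_X T)(TY)+T((\nabla_X T)Y)=0$. Setting $S:=T^{2}+I$, which is then parallel, Lemma \ref{le6}~(ii) gives $S=\sum_{i=1}^{k}\sin^{2}\theta_i\cdot P_i$, so $\ker S=\mathcal D_0$ (as $\cos^{2}\theta_0=1$ while $\cos^{2}\theta_i<1$ for $i\geq1$). For $Y\in\Gamma(\mathcal D_0)$ and any $X$, parallelism yields $S(\nabla_X Y)=\nabla_X(SY)=0$, hence $\nabla_X Y\in\ker S=\mathcal D_0$; thus $\mathcal D_0$ is parallel and in particular completely integrable. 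Since $\oplus_{i=1}^{k}\mathcal D_i=\mathcal D_0^{\perp}$ and $\nabla$ is metric, the orthogonal complement of a parallel distribution is again parallel, which settles the second distribution. The reason (i) is phrased with $\mathcal D_0$ and $\oplus_{i\geq1}\mathcal D_i$ rather than each $\mathcal D_i$ separately is that only the eigenvalue $-1$ is automatically constant; isolating a single slant distribution would require differentiating $\cos^{2}\theta_i$, which the present argument deliberately avoids, and which is instead supplied by the criterion in (ii).

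For part (ii) I would invoke the integrability criterion (iii) of the first Theorem of this section, asserting that, under $\theta_j(p)\neq\frac{\pi}{2}$ for $j\neq i$, the distribution $\mathcal D_i$ is integrable if and only if $\nabla_X TY-\nabla_Y TX+A_{NX}Y-A_{NY}X\in\Gamma(\mathcal D_i)$ for all $X,Y\in\Gamma(\mathcal D_i)$. By Proposition \ref{k}~(i), $\nabla T=0$ is equivalent to $A_{NX}Y=A_{NY}X$, so the shape-operator terms cancel and the criterion reduces exactly to $\nabla_X TY-\nabla_Y TX\in\Gamma(\mathcal D_i)$, which is the claim. For part (iii) I would start from Lemma \ref{l}~(i): $\nabla T=0$ gives $A_{NY}X+th(X,Y)=0$; taking $X,Y\in\Gamma(\mathcal D_0)$, invariance forces $NY=0$, hence $A_{NY}X=0$ and therefore $th(X,Y)=0$. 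Writing $V:=h(X,Y)$, so $\varphi V=nV$, and applying $\varphi$ once more, $-V=\varphi^{2}V=\varphi(nV)=t(nV)+n^{2}V$; the tangential identity $T(tV)+t(nV)=0$ together with $tV=0$ gives $t(nV)=0$, leaving $n^{2}V=-V$. Thus $n^{2}h(X,Y)=-h(X,Y)$ for all $X,Y\in\Gamma(\mathcal D_0)$, so each $h(X,Y)$ is either zero or a $(-1)$-eigenvector of $n^{2}$: if all vanish, $M$ is $(\mathcal D_0,\mathcal D_0)$-totally geodesic, and otherwise some $h(X,Y)\neq0$ exhibits $-1$ as an eigenvalue of $n^{2}$ and every nonzero $h(X,Y)$ is an eigenvector for it.

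The main obstacle I expect is part (iii): one must resist simply reading off $n^{2}$ from Lemma \ref{le6}~(ii) and instead extract $n^{2}V=-V$ purely from $tV=0$ via $\varphi^{2}=-I$, taking care that the cancellation of the term $t(nV)$ comes from the tangential part of that identity. The rest of the difficulty is organizational, namely tracking which relations are equivalences (Proposition \ref{k}~(i)) as opposed to one-way implications, and recognizing that the asymmetric formulation of (i) is exactly what the constancy of the eigenvalue $-1$, but not of the $\cos^{2}\theta_i$, permits.
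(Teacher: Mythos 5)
Your proof is correct, but parts (i) and (iii) take genuinely different routes from the paper's. For (i), the paper argues extrinsically and uses the K\"{a}hler condition: from $\bar\nabla\varphi=0$ and the Gauss--Weingarten equations it derives $\sum_{i=1}^kNP_i(\nabla_XY)=h(X,TY)-\varphi h(X,Y)$ for $Y\in\Gamma(\mathcal D_0)$, kills the right-hand side against $N\big(\sum_{i=1}^kP_i(\nabla_XY)\big)$ using $th(X,Y)=0$ (obtained from Lemma \ref{l}~(i)), and concludes $P_i(\nabla_XY)=0$ from the norm identity $\Vert NZ\Vert^2=\sum_{i=1}^k\sin^2\theta_i\,\Vert P_iZ\Vert^2$. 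Your argument --- $\nabla T=0$ forces $S:=T^2+I$ to be parallel, $\ker S=\mathcal D_0$ pointwise by Lemma \ref{le6}~(ii) since $\sin^2\theta_i>0$ for $i\geq 1$, and a parallel endomorphism has $\nabla$-invariant kernel --- is purely intrinsic and never invokes the K\"{a}hler hypothesis, so it actually establishes (i) for pointwise $k$-slant submanifolds of an arbitrary almost Hermitian manifold; that is a genuine strengthening, and it cleanly isolates where the K\"{a}hler structure is really needed (only through Lemma \ref{l} and Proposition \ref{k}, i.e., in (ii) and (iii)). For (ii), your reduction via part (iii) of the section's opening integrability theorem together with Proposition \ref{k}~(i) (which gives $A_{NX}Y-A_{NY}X=0$) is logically the same computation the paper redoes by hand, just routed through previously established results. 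For (iii), the paper relies on the complete integrability proved in (i): $N(\nabla_XY)=0$ combined with the K\"{a}hler condition gives $nh(X,Y)=h(X,TY)$ on $\mathcal D_0$, and substituting $TY$ for $Y$ yields $n^2h(X,Y)=-h(X,Y)$; you instead obtain $th(X,Y)=0$ directly from Lemma \ref{l}~(i) and finish with the $\varphi^2=-I$ algebra $tV=0\Rightarrow n^2V=-V$, which makes your (iii) independent of (i) and in fact valid for all $X\in\Gamma(TM)$ and $Y\in\Gamma(\mathcal D_0)$. Both arguments are sound; the paper's is uniform in its use of the ambient structure, while yours buys generality in (i) and a shorter, self-contained (iii).
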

\begin{proof}
For all $X\in \Gamma(TM)$ and $Y\in \Gamma(\mathcal D_0)$, we have
$$0=(\bar \nabla_X \varphi)Y=-N(\nabla_XY)+h(X,TY)-\varphi h(X,Y)$$
from Gauss and Weingarten equations, which, for all $X\in \Gamma(TM)$ and
\linebreak
$Y\in \Gamma(\mathcal D_0)$, implies
$$\sum_{i=1}^kNP_i(\nabla_XY)=h(X,TY)-\varphi h(X,Y).$$

Since $\nabla T=0$, we have $th(X,Y)=0$ for all $X\in \Gamma(TM)$ and $Y\in \Gamma(\mathcal D_0)$,
so
$$g(\varphi h(X,Y),Z)=-g(h(X,Y),\varphi Z)=-g(h(X,Y),NZ)=g(th(X,Y),Z)=0$$
for all $X,Z\in \Gamma(TM)$ and $Y\in \Gamma(\mathcal D_0)$. On the other hand,
for $Y\in \Gamma(\mathcal D_0)$, we have $TY\in \Gamma(\mathcal D_0)$, which gives
$g(h(X,TY),NZ)=0$, and we obtain
\begin{align*}\Big\Vert N\big(\sum_{i=1}^kP_i(\nabla_XY)\big)\Big\Vert^2&=g\Big(h(X,TY),N\big(\sum_{i=1}^kP_i(\nabla_XY)\big)\Big)\\
&\hspace{12pt}-g\Big(\varphi h(X,Y),\sum_{i=1}^kNP_i(\nabla_XY)\Big) =0
\end{align*}
for all $X\in \Gamma(TM)$ and $Y\in \Gamma(\mathcal D_0)$.
Since
$$g(NX,NX)=\sum_{i=1}^k\sin^2 \theta_i \cdot g(P_iX,P_iX)$$
for all $X\in \Gamma(TM)$, we have
$$\Big\Vert N\big(\sum_{i=1}^kP_i(\nabla_XY)\big)\Big\Vert^2=\sum_{i=1}^k\sin^2 \theta_i \cdot\Big\Vert P_i(\nabla_XY)\Big\Vert^2,$$
and, from the fact that $\theta_i$ is nowhere zero, we obtain
$P_i(\nabla_XY)=0$ for any $i\in \{1,\dots,k\}$, which implies $\nabla_XY\in \Gamma(\mathcal D_0)$ for all $X\in \Gamma(TM)$, $Y\in \Gamma(\mathcal D_0)$; hence, the distribution $\mathcal D_0$ is completely integrable.

Also, for any $X\in \Gamma(TM)$ and $Y\in \Gamma(\oplus_{i=1}^k \mathcal D_i)$, we have $\nabla_XY\in \Gamma(\oplus_{i=1}^k \mathcal D_i)$ since
$$g(\nabla_XY,Z)=-g(Y,\nabla_XZ)=0$$
for all $Z\in \Gamma(\mathcal D_0)$. Therefore, we obtain (i).
\pagebreak

If the distribution $\mathcal D_i$ is integrable, then, for all $X,Y\in \Gamma(\mathcal D_i)$, we have $T[X,Y]\in \Gamma(\mathcal D_i)$, which implies
$$\nabla_XTY-\nabla_YTX=T(\nabla_XY-\nabla_YX)\in \Gamma(\mathcal D_i).$$
Conversely, if $\nabla_XTY-\nabla_YTX\in \Gamma(\mathcal D_i)$ for all $X,Y\in \Gamma(\mathcal D_i)$, then
$$T[X,Y]=\nabla_XTY-\nabla_YTX\in \Gamma(\mathcal D_i).$$
Applying $T$, we get $\sum_{j=0}^k\cos^2 \theta_j\cdot P_j[X,Y]\in \Gamma(\mathcal D_i)$ from Lemma \ref{le6} (ii), and, taking into account the orthogonality of the distributions and the fact that $\theta_j(p)\neq \frac{\pi}{2}$ for all $p\in M$ and $j\in \{0,\dots,k\}$ with $j\neq i$, we obtain (ii).

Now, since $\mathcal D_0$ is completely integrable, from the K\"{a}hler condition,
we deduce
$$n h(X,Y)-h(X,TY)=-N(\nabla_XY)=0$$
for all $X,Y\in \Gamma(\mathcal D_0)$.
Writing this relation for $TY$ instead of $Y$, we get
$$n^2 h(X,Y)=-h(X,Y)$$
for all $X,Y\in \Gamma(\mathcal D_0)$, and we obtain (iii).
\end{proof}

\begin{theorem}
Let $M$ be a pointwise $k$-slant submanifold of a K\"{a}hler manifold $(\bar M,\varphi,g)$.
If $\nabla N=0$, then:

(i) $M$ is a $(\mathcal D_0,\mathcal D_i)$-mixed totally geodesic submanifold of $\bar M$ (i.e., $h(X,Y)=\nolinebreak 0$ for all $X\in \Gamma(\mathcal D_0)$ and $Y\in \Gamma(\mathcal D_i)$) for $i\in \{1,\dots,k\}$;

(ii) for $i\in \{0,\dots,k\}$, either $M$ is a $(\mathcal D_i,\mathcal D_i)$-totally geodesic submanifold of $\bar M$, or $(-\cos^2\theta_i)$ is an eigenvalue function of $n^2$, and, for $X,Y\in \Gamma(\mathcal D_i)$, any nonzero $h(X,Y)$ is an eigenvector for it;

(iii) for $i\in\{0,\dots,k\}$, either $h(X,Y)=0$ for any $X\in \Gamma(\mathcal D_i)$ and
\linebreak
$Y\in \Gamma(TM)$, or ($-\cos^2\theta_i$) is an eigenvalue function of $T^2$, and, for $X\in \Gamma(\mathcal D_i)$ and $V\in \Gamma(T^{\bot}M)$, any nonzero $A_VX$ is an eigenvector for it.
\end{theorem}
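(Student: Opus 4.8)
The plan is to distill the hypothesis $\nabla N=0$ into a single working identity and then exploit the eigenvalue structure of $T$ and $n$ recorded in Lemma \ref{le6}(ii). By Lemma \ref{l}(ii) we have $(\nabla_X N)Y=-h(X,TY)+nh(X,Y)$, so $\nabla N=0$ is equivalent to
$$h(X,TY)=nh(X,Y)\qquad\text{for all }X,Y\in\Gamma(TM),$$
and I will treat this as the master relation throughout, alongside the equivalent formulations collected in Proposition \ref{k}.

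For parts (i) and (ii) the key is an iteration. Fix $Y\in\Gamma(\mathcal D_i)$; since $T(\mathcal D_i)\subseteq\mathcal D_i$, the master relation may be applied first to $Y$ and then to $TY$, giving $h(X,T^2Y)=n\,h(X,TY)=n^2h(X,Y)$. Because $T^2Y=-\cos^2\theta_i\,Y$ on $\mathcal D_i$ by Lemma \ref{le6}(ii), this collapses to
$$n^2h(X,Y)=-\cos^2\theta_i\,h(X,Y)\qquad\text{for all }X\in\Gamma(TM),\ Y\in\Gamma(\mathcal D_i).$$
Restricting to $X,Y\in\Gamma(\mathcal D_i)$ gives (ii) at once: either $h$ vanishes identically on $\mathcal D_i\times\mathcal D_i$, or any nonzero value $h(X,Y)$ is an eigenvector of $n^2$ for the eigenvalue function $-\cos^2\theta_i$. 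For (i) I take $X\in\Gamma(\mathcal D_0)$ and $Y\in\Gamma(\mathcal D_i)$ with $i\ge1$: reading the displayed relation once with $Y\in\mathcal D_i$, and once with the $\mathcal D_0$-slot (via the symmetry $h(X,Y)=h(Y,X)$ and $\theta_0=0$, so $\cos^2\theta_0=1$), gives $-\cos^2\theta_i\,h(X,Y)=n^2h(X,Y)=-h(X,Y)$, whence $\sin^2\theta_i\,h(X,Y)=0$; since $\theta_i\in(0,\frac{\pi}{2}]$ forces $\sin\theta_i\neq0$, we conclude $h(X,Y)=0$.

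Part (iii) is handled by the same iteration trick applied to the shape operator instead of $h$. Here I start from the equivalent form of $\nabla N=0$ in Proposition \ref{k}(2), namely $T(A_VX)=-A_V(TX)$. Applying $T$ once more and using this identity again with $TX$ in place of $X$ yields $T^2(A_VX)=A_V(T^2X)$; for $X\in\Gamma(\mathcal D_i)$ this becomes $T^2(A_VX)=-\cos^2\theta_i\,A_VX$. Since $A_VX=0$ for all $V\in\Gamma(T^{\bot}M)$ is equivalent, via $g(A_VX,Y)=g(h(X,Y),V)$, to $h(X,Y)=0$ for all $Y\in\Gamma(TM)$, the dichotomy in (iii) follows: either $X$ contributes no second fundamental form, or every nonzero $A_VX$ is an eigenvector of $T^2$ with eigenvalue function $-\cos^2\theta_i$.

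The calculations are short; the only genuinely delicate point is the comparison step in (i), where the \emph{same} master relation must be read with two different distributions in mind — once using the eigenvalue $-\cos^2\theta_i$ attached to $Y\in\mathcal D_i$ and once the eigenvalue $-1=-\cos^2\theta_0$ attached to $X\in\mathcal D_0$ — so that subtracting the two forces the factor $\sin^2\theta_i$ and kills $h(X,Y)$. The remaining work is just bookkeeping with Lemma \ref{le6}(ii) and Proposition \ref{k}.
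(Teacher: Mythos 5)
Your proposal is correct and follows essentially the same route as the paper: the master relation $h(X,TY)=nh(X,Y)$ iterated via Lemma \ref{le6}(ii) to get $n^{2}h(X,Y)=-\cos^{2}\theta_i\,h(X,Y)$ for (ii), the symmetry of $h$ together with $\cos^{2}\theta_0=1$ to force $\sin^{2}\theta_i\,h(X,Y)=0$ for (i), and the iteration of $T(A_VX)=-A_V(TX)$ from Proposition \ref{k}(ii)(2) to obtain $T^{2}(A_VX)=A_V(T^{2}X)$ for (iii). The only difference is that you spell out a couple of steps the paper leaves implicit (the double application of the master relation, and the equivalence $A_VX=0$ for all $V$ iff $h(X,\cdot)=0$ via $g(A_VX,Y)=g(h(X,Y),V)$).
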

\begin{proof}
Since $\nabla N=0$, we have $h(X,TY)=nh(X,Y)$
for all \linebreak
$X,Y\in \Gamma(TM)$, which implies
$$n^2h(X,Y)=-\cos^2\theta_i\cdot h(X,Y)$$
for all $Y\in \Gamma(\mathcal D_i)$. For $X,Y\in \Gamma(\mathcal D_i)$, we deduce (ii).
For $X\in \Gamma(\mathcal D_0)$ and $Y\in \Gamma(\mathcal D_i)$, we obtain
$$n^2h(X,Y)=n^2h(Y,X)=nh(Y,TX)=h(Y,T^2X)=-h(Y,X)=-h(X,Y),$$
and we get
$$\sin^2\theta_i\cdot h(X,Y)=0,$$
hence (i).

Applying $T$ to the relation from Proposition \ref{k} (ii)(2), we infer
$$T^2(A_VX)=A_V(T^2X)$$
for all $X\in \Gamma(TM)$ and $V\in \Gamma(T^{\bot}M)$; therefore, for all $X\in \Gamma(\mathcal D_i)$, we have
$$T^2(A_VX)=-\cos^2\theta_i\cdot A_VX,$$
and we get (iii).
\end{proof}

\begin{theorem}\label{h}
Let $M$ be a connected pointwise $k$-slant submanifold of an almost Hermitian manifold $(\bar M,\varphi,g)$. Then:

(i) $\nabla T^2=0$ if and only if $M$ is a $k$-slant submanifold, and $\nabla$ restricts to $\mathcal D_i$ (i.e., $\nabla_XY\in \Gamma(\mathcal D_i)$ for all $X\in \Gamma(TM)$ and $Y\in \Gamma(\mathcal D_i)$) for any $i\in \{0,\dots, k\}$;

(ii) for $i\in \{0,\dots, k\}$, $(\nabla_X T^2)Y=(\nabla_Y T^2)X$ for all $X,Y\in \Gamma(\mathcal D_i)$ if and only if $\mathcal D_i$ is a slant, integrable distribution. Furthermore, if $T^2$ is a Codazzi tensor field on $\mathcal D_i$ for all $i\in \{0,\dots, k\}$, then $M$ is a $k$-slant submanifold.
\end{theorem}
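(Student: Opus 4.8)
The plan is to reduce everything to the diagonal form of $T^2$ supplied by Lemma \ref{le6}(ii), $T^2=-\sum_{i=0}^k\cos^2\theta_i\cdot P_i$, and to exploit that the eigenvalue functions $-\cos^2\theta_i$ are pointwise distinct. Since $T$ is skew-symmetric (Lemma \ref{le6}(i)), $T^2$ is symmetric, so the Codazzi condition is meaningful. For $Y\in\Gamma(\mathcal D_i)$ I would first compute, via the Leibniz rule and $\nabla_XY=\sum_jP_j(\nabla_XY)$,
\[
(\nabla_XT^2)Y=-\bigl(X\cos^2\theta_i\bigr)\,Y+\sum_{j\neq i}\bigl(\cos^2\theta_j-\cos^2\theta_i\bigr)\,P_j(\nabla_XY).
\]
The structural point is that the first term lies in $\mathcal D_i$ while the $j$-th term lies in the orthogonal piece $\mathcal D_j$, and that, because the slant functions are pointwise distinct and $\cos^2$ is injective on $[0,\tfrac{\pi}{2}]$, each coefficient $\cos^2\theta_j-\cos^2\theta_i$ is nowhere zero for $j\neq i$. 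This lets me read off the orthogonal components separately.

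For (i) I would impose $(\nabla_XT^2)Y=0$ for all $X\in\Gamma(TM)$ and every $Y\in\Gamma(\mathcal D_i)$. Projecting onto $\mathcal D_i$ forces $(X\cos^2\theta_i)\,Y=0$, hence $X\cos^2\theta_i=0$ for \emph{every} $X\in\Gamma(TM)$; as $M$ is connected this gives $\theta_i$ constant, i.e.\ $M$ is $k$-slant. Projecting onto each $\mathcal D_j$ with $j\neq i$ and cancelling the nonvanishing factor $\cos^2\theta_j-\cos^2\theta_i$ forces $P_j(\nabla_XY)=0$, i.e.\ $\nabla_XY\in\Gamma(\mathcal D_i)$, so $\nabla$ restricts to $\mathcal D_i$. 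The converse is immediate: constancy of $\theta_i$ kills the first term and the restriction property kills the sum.

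For (ii) I would specialise to $X,Y\in\Gamma(\mathcal D_i)$ and form the Codazzi difference; since $\nabla$ is torsion-free, $P_j(\nabla_XY)-P_j(\nabla_YX)=P_j([X,Y])$, so
\[
(\nabla_XT^2)Y-(\nabla_YT^2)X=-\bigl(X\cos^2\theta_i\bigr)Y+\bigl(Y\cos^2\theta_i\bigr)X+\sum_{j\neq i}\bigl(\cos^2\theta_j-\cos^2\theta_i\bigr)P_j([X,Y]).
\]
Projecting onto $\mathcal D_j$ ($j\neq i$) and cancelling the nonzero coefficients shows that the Codazzi identity is equivalent to $P_j([X,Y])=0$ for all such $j$, i.e.\ to integrability of $\mathcal D_i$; projecting onto $\mathcal D_i$ shows it is equivalent to $(X\cos^2\theta_i)Y=(Y\cos^2\theta_i)X$, which encodes constancy of $\theta_i$, i.e.\ slantness of $\mathcal D_i$. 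The ``furthermore'' then follows by running this for every $i$: each component is integrable and each slant function is constant, so $M$ is $k$-slant.

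The step I expect to be most delicate is extracting slantness from the $\mathcal D_i$-projection $(X\cos^2\theta_i)Y=(Y\cos^2\theta_i)X$. When $\operatorname{rank}\mathcal D_i\geq 2$ one chooses $X,Y$ pointwise linearly independent to deduce $X\cos^2\theta_i=0$ for all $X\in\Gamma(\mathcal D_i)$; when $\operatorname{rank}\mathcal D_i=1$ the identity is vacuous, but here skew-symmetry of $T$ together with $T(\mathcal D_i)\subseteq\mathcal D_i$ (Definition \ref{d1}(ii)) forces $T|_{\mathcal D_i}=0$, so $\mathcal D_i$ is automatically anti-invariant with $\theta_i\equiv\tfrac{\pi}{2}$. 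Separating these two cases, and being careful about exactly which notion of constancy the paper's definition of a slant distribution demands, is the main obstacle; everything else is bookkeeping with the orthogonal projections $P_j$.
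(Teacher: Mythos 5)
Your proposal is correct and follows essentially the same route as the paper's proof: the diagonal form $T^2=-\sum_{i}\cos^2\theta_i\cdot P_i$ from Lemma \ref{le6}(ii), the Leibniz-rule computation of $(\nabla_XT^2)Y$, projection onto the orthogonal components, and the pointwise distinctness of the slant functions to cancel the nowhere-vanishing factors $\cos^2\theta_j-\cos^2\theta_i$, with torsion-freeness converting the Codazzi difference into $P_j[X,Y]$ terms. The only divergence is that you explicitly treat the rank-one case of $\mathcal D_i$ in (ii), where $(X\cos^2\theta_i)Y=(Y\cos^2\theta_i)X$ is vacuous but skew-symmetry of $T$ forces $\theta_i\equiv\frac{\pi}{2}$; the paper's proof glosses over this when asserting that identity is equivalent to constancy of $\theta_i$, so your treatment is, if anything, the more careful one.
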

\begin{proof}
(i) Following the same steps as in \cite{blla}, we obtain
$$(\nabla_X T^2)Y= -\sum_{i=0}^kX(\cos^2 \theta_i)P_iY+\sum_{0\leq i,j \leq k}(\cos^2 \theta_i-\cos^2 \theta_j)P_i(\nabla_XP_jY)$$
for all $X,Y\in \Gamma(TM)$.
Taking into account the orthogonality of the distributions, the condition $\nabla T^2=0$ is equivalent to:
$$\sum_{j=0}^k(\cos^2 \theta_i-\cos^2 \theta_j)P_i(\nabla_XP_jY)-X(\cos^2 \theta_i)P_iY=0
$$
for all $X,Y\in \Gamma (TM)$ and any $i\in \{0,\dots, k\}$. We get
$$ X(\cos^2 \theta_i)Y=0
$$
for all $X\in \Gamma(TM)$ and $Y\in \Gamma(\mathcal D_i)$, $i\in \{0,\dots,k\}$, hence
$\theta_i$ is constant for all $i\in \{1,\dots,k\}$ (so $M$ is a $k$-slant submanifold). Also, $P_i(\nabla_XY)=0$ for all $Y\in \Gamma(\mathcal D_j)$, $i\neq j$;
hence, $\nabla$ restricts to $\mathcal D_j$ for any $j\in \{0,\dots,k\}$.
The converse implication follows immediately since $\nabla_XP_jY\in \Gamma(\mathcal D_j)$ for all $X,Y\in \Gamma(TM)$ and any $j\in \{0,\dots,k\}$.

(ii) On the other hand, for all $X,Y\in \Gamma(TM)$, we get
\begin{align*}
(\nabla_XT^2)Y-(\nabla_YT^2)X&= \sum_{j=0}^kP_j\Big(-X(\cos^2 \theta_j)Y+Y(\cos^2\theta_j)X\Big)\\
&\hspace{12pt}-\sum_{j=0}^kP_j\Big(\sum_{l=0}^k\cos^2\theta_l(\nabla_XP_lY-\nabla_YP_lX)\Big)\\
&\hspace{12pt}+\sum_{j=0}^kP_j\Big(\cos^2\theta_j(\nabla_XY-\nabla_YX)\Big).
\end{align*}
In particular, for $X,Y\in \Gamma(\mathcal D_i)$, we obtain
\begin{align*}
(\nabla_XT^2)Y-(\nabla_YT^2)X&= \Big(-X(\cos^2 \theta_i)Y+Y(\cos^2\theta_i)X\Big)\\
&\hspace{12pt}-\sum_{j=0}^kP_j\Big(\cos^2\theta_i(\nabla_XY-\nabla_YX)\Big)\\
&\hspace{12pt}+\sum_{j=0}^kP_j\Big(\cos^2\theta_j(\nabla_XY-\nabla_YX)\Big),
\end{align*}
and we deduce that $(\nabla_X T^2)Y=(\nabla_Y T^2)X$ for all $X,Y\in \Gamma(\mathcal D_i)$ if and only if
$$\left\{
   \begin{array}{ll}
     X(\cos^2\theta_i)Y-Y(\cos^2\theta_i)X=0 \\
     (\cos^2\theta_j -\cos^2\theta_i)P_j[X,Y]=0
   \end{array}
 \right.$$
for all $X,Y\in \Gamma(\mathcal D_i)$ and $j\neq i$. The first assertion is equivalent to the fact that $\theta_i$ is a constant, and the second one, since $\theta_i$ and $\theta_j$ are pointwise distinct for $i \neq j$, is equivalent to the integrability of $\mathcal D_i$.
\end{proof}

\smallskip

Hence, we recovered in Theorem \ref{h} (i) a known result proved by Chen \hspace{-2pt}\cite{geo}.

\smallskip

In particular, from the proof of Theorem \ref{h}, we deduce

\begin{corollary} \label{c8}
Let $M$ be a $k$-slant submanifold of an almost Hermitian manifold $(\bar M,\varphi,g)$. Then:

(i) for $i\in \{0,\dots,k\}$, $\mathcal D_i$ is completely integrable if and only if $(\nabla_XT^2)Y=0$ for all $X,Y\in \Gamma(\mathcal D_i)$;

(ii) for $i\in \{0,\dots,k\}$, $\mathcal D_i$ is integrable if and only if $(\nabla_X T^2)Y=(\nabla_Y T^2)X$ for all $X,Y\in \Gamma(\mathcal D_i)$.
\end{corollary}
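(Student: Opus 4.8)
The plan is to read off both equivalences directly from the two computations already carried out in the proof of Theorem \ref{h}, specialized to the $k$-slant setting in which every slant function $\theta_i$ is constant. The key simplification is that $X(\cos^2\theta_i)=0$ for all $X\in\Gamma(TM)$ and all $i$, so every derivative-of-angle term in those formulas drops out, and we are left with linear combinations of projections of $\nabla_X Y$ (respectively of $[X,Y]$) weighted by the differences $\cos^2\theta_a-\cos^2\theta_b$.

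For part (i), I would start from the identity
$$(\nabla_X T^2)Y= -\sum_{a=0}^kX(\cos^2 \theta_a)P_aY+\sum_{0\leq a,b \leq k}(\cos^2 \theta_a-\cos^2 \theta_b)P_a(\nabla_XP_bY)$$
established in Theorem \ref{h}. Restricting $X,Y\in \Gamma(\mathcal D_i)$ and using that each $\theta_a$ is constant, the first sum vanishes, and in the second only $b=i$ survives (since $P_bY=0$ for $b\neq i$); the term $a=i$ then also vanishes because its coefficient is zero. This leaves
$$(\nabla_X T^2)Y=\sum_{a\neq i}(\cos^2 \theta_a-\cos^2 \theta_i)P_a(\nabla_X Y).$$
Because the distributions $\mathcal D_a$ are mutually orthogonal, the summands lie in pairwise orthogonal subbundles, so the whole expression vanishes if and only if each $(\cos^2\theta_a-\cos^2\theta_i)P_a(\nabla_X Y)=0$. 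Here the coefficients are nonzero: the $\theta_a$ are pointwise distinct and lie in $(0,\frac{\pi}{2}]$ (with $\theta_0=0$), so $\cos^2$ is injective on the relevant range and $\cos^2\theta_a\neq \cos^2\theta_i$ for $a\neq i$. Hence $(\nabla_X T^2)Y=0$ is equivalent to $P_a(\nabla_X Y)=0$ for all $a\neq i$, i.e. to $\nabla_X Y\in \Gamma(\mathcal D_i)$; requiring this for all $X,Y\in \Gamma(\mathcal D_i)$ is precisely complete integrability of $\mathcal D_i$.

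For part (ii), I would run the same argument on the symmetrized formula
$$(\nabla_XT^2)Y-(\nabla_YT^2)X= \sum_{j=0}^k(\cos^2\theta_j-\cos^2\theta_i)P_j[X,Y],$$
which is what the corresponding display in the proof of Theorem \ref{h} reduces to once $\theta_i$ is constant and $X,Y\in \Gamma(\mathcal D_i)$ (so that $\nabla_X Y-\nabla_Y X=[X,Y]$ and the derivative-of-angle bracket disappears). Dropping the vanishing $j=i$ term and invoking orthogonality together with $\cos^2\theta_j\neq \cos^2\theta_i$ for $j\neq i$, the left-hand side vanishes if and only if $P_j[X,Y]=0$ for all $j\neq i$, i.e. $[X,Y]\in \Gamma(\mathcal D_i)$; asking this for all $X,Y\in \Gamma(\mathcal D_i)$ is exactly integrability of $\mathcal D_i$.

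There is no serious obstacle here, since the heavy tensor computation has been done in Theorem \ref{h}; the only point requiring care is the cancellation of coefficients, which rests on two facts that I would state explicitly: the constancy of $\theta_i$ (killing the $X(\cos^2\theta_i)$ terms) and the pointwise distinctness of the slant angles, which guarantees $\cos^2\theta_j\neq\cos^2\theta_i$ for $j\neq i$ after noting that $\cos^2$ is injective on $[0,\frac{\pi}{2}]$. Mutual orthogonality of the $\mathcal D_a$ is what lets me pass from ``the sum is zero'' to ``each projection is zero''.
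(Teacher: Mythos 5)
Your proof is correct and follows exactly the route the paper intends: the corollary is stated as a consequence "from the proof of Theorem \ref{h}", and you specialize the two displayed formulas of that proof to constant slant functions, using orthogonality of the $\mathcal D_a$ and the pointwise distinctness of the angles (hence $\cos^2\theta_a\neq\cos^2\theta_i$ for $a\neq i$) to pass from vanishing of the sum to vanishing of each projection. No gaps; this matches the paper's argument.
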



Adara M. Blaga\\
West University of Timi\c{s}oara\\
300223, Timi\c{s}oara, ROM\^{A}NIA\\
\emph{E-mail address}: {\tt adarablaga@yahoo.com}\\
ORCID ID: 0000-0003-0237-3866

\bigskip

Dan Radu La\c tcu\\
Central University Library of Timi\c{s}oara\\
300223, Timi\c{s}oara, ROM\^{A}NIA\\
\emph{E-mail address}: {\tt latcu07@yahoo.com}\\
ORCID ID: 0000-0003-1201-7400

\end{document}